\documentclass{ifacconf}
\usepackage{amsmath,amssymb,amsfonts}
\usepackage{enumitem}
\usepackage{cite}
\usepackage{graphicx}      
\usepackage{natbib}        
\usepackage{float}
\usepackage{xcolor}

\newtheorem{remark}{Remark}
\newtheorem{example}{Example}
\newtheorem{theorem}{Theorem}
\newtheorem{proof}{Proof}

\begin{document}
\begin{frontmatter}

\title{Optimization-based One-side Boundary Control of LWR Traffic Models}

\author[First]{Eryn Vaid} 
\author[Second]{Maria Teresa  Chiri} 
\author[Third]{Roberto Guglielmi}
\author[First]{Gennaro Notomista}

\address[First]{Department of Electrical and Computer
Engineering, University of Waterloo, Waterloo, Canada, (e-mail: evaid@uwaterloo.ca, gennaro.notomista@uwaterloo.ca).}
\address[Second]{Department of Mathematics and Statistics Queen’s University, Kingston, Canada, (e-mail: maria.chiri@queensu.ca)}
\address[Third]{Department of Applied Mathematics, 
   University of Waterloo, Waterloo, Canada, (e-mail: roberto.guglielmi@uwaterloo.ca)}

\begin{abstract}                
In this paper, we study the feasibility of a class of optimization-based boundary control of one-dimensional macroscopic traffic flow models, where stability and invariance are achieved by 
a single boundary 
control. We define the sets of controllers to stabilize the system to a desired state via Lyapunov functionals, and 
to ensure forward invariance of a desired subset via boundary control barrier functionals. The control input is then selected from the intersection of those sets via a convex optimization problem. We determine sufficient conditions to ensure the existence of an optimal boundary control problem achieving both stability and invariance for a generic traffic flux function. Simulation results showcase the behavior of the proposed optimization-based controller applied to conservation laws with several traffic flow functions.

\end{abstract}





\end{frontmatter}

\section{Introduction}
\label{section 1}




Control problems involving Partial Differential Equations (PDEs) appear across many application areas, such as fluid dynamics, heat transfer, structural mechanics, and delay systems. In these settings, two fundamental control objectives are stability, which relates to driving the system toward a desired long-term state, and invariance, which ensures that the system trajectory remains within a specified safe region throughout the time horizon.
Substantial effort has been devoted to the synthesis of stabilizing controllers for PDEs, leading to methodologies based on control Lyapunov functionals and backstepping. The latter has been successfully applied in traffic, thermal, and multi-agent systems, among others, but its practical implementation typically relies on numerical approximations and ad hoc computational procedures (\cite{vazquez2024backstepping}). Invariance has instead been explored primarily through optimization-based controllers, where invariance requirements are encoded as state constraints imposed along the prediction horizon (\cite{troltzsch2010optimal,daudin2023optimal}). Although these methods provide conditions for existence and regularity of the resulting control laws, their computational complexity generally makes them unsuitable for real-time applications—particularly in high-dimensional settings such as fluid dynamics and traffic systems.

Building on these observations, recent work in \cite{chiri2025boundary} introduced a convex optimization framework for PDE boundary control that guarantees both stability and invariance in traffic-flow dynamics. Motivated by the computational limitations of existing optimization-based methods, 
\cite{chiri2025boundary} proposed a boundary-control architecture based on two distinct inputs: one responsible for stabilization and one for enforcing invariance. This two-input framework demonstrated that real-time implementability is achievable when the control design exploits boundary actuation together with appropriately structured convex programs.

In this paper, we investigate whether a single boundary input can simultaneously enforce both objectives. Our contributions are threefold. First, we identify feasibility conditions under which a single boundary controller guaranties stability and invariance. Second, we construct convex single-boundary optimization programs over intervals where the control Lyapunov (\cite{ledyaev1999lyapunov}) and barrier functionals (\cite{ames2019control}) exhibit convexity. Third, we derive structural properties that streamline the associated feasibility analysis. We then validate the proposed methodology through numerical simulations.
Furthermore, while existing works often assume a specific parametrization of the traffic flux function, our approach does not rely on an explicit functional form. Instead, we develop all results under the general structural assumptions satisfied by admissible flux functions in the LWR traffic model (\cite{lighthill1955traffic,richards1956shock}), extending the problem formulation in~\cite{chiri2025boundary}.




The rest of the paper is structured as follows: 
In Section~\ref{section 2} we outline the theory and methodology behind achieving stability via Lyapunov functionals and invariance via control barrier functionals (CBF). In Section~\ref{section 3} we determine the feasibility of a single input controller, for both the left-side and right-side cases. We conclude by validating our results via numerical simulations illustrated in Section~\ref{section 4}.
\section{Optimization-based Traffic Control 
}\label{section 2}
This section provides the essential background for our analysis. We first restate the boundary value problem associated with the conservation law under consideration, then summarize the asymptotic stabilization framework, and finally introduce the barrier functional that formalizes the invariance requirement.

\subsection{Boundary value problem for the LWR Model }\label{subsec:IBVP}

We consider the Lighthill--Whitham--Richards (LWR) model, a macroscopic description of traffic flow based on conservation of vehicles. Let $u(t,x)$ denote the density and {$v(u)$} the mean vehicle speed, so that the traffic flux is 
$f(u)=uv(u)$.
The resulting dynamics are governed by the conservation law
$u_t + f(u)_x = 0$.
Throughout this paper, we assume that the flux $f:[0,u_{\max}] \to \mathbb{R}_+$
is $\mathcal{C}^2$, strictly concave, and satisfies
\(f(0)=f(u_{\max})=0,\)
which ensures the existence of a unique maximizer $\hat{u}\in(0,u_{\max})$.
On an interval $(a,b)$, we consider the initial--boundary value problem (IBVP)
\begin{align}
    &u_t + f(u)_x = 0, &u(0,x)=u_0(x), \label{eq1}\\
    &u(t,a) = \omega_a(t),  &u(t,b)=\omega_b(t), \label{eq2}
\end{align}
with weak entropy solutions and initial data of bounded variation (BV) (\cite{bardos1979first}).

\subsection{Lyapunov-Based Stabilization}

Given an equilibrium density $u^*\in[0, u_{\max}]$, the stabilization problem consists in determining boundary inputs $\omega_a$, $\omega_b$ ensuring Lyapunov stability of~\eqref{eq1}–\eqref{eq2}.  
By following the methods presented in (\cite{7509658,bayen2022control}), a natural candidate Lyapunov functional is
\begin{equation}\label{eq:V-summary}
    V(u(t)) = \frac12\int_a^b (u(t,x)-u^*)^2 \, dx,
\end{equation}
which is well defined and continuous for entropy solutions in BV (\cite{7509658}). 
The derivative of $V$ can be explicitly written in terms of boundary traces and jump contributions (\cite{7509658}). The latter can be removed as they are always nonpositive which decreases the Lyapunov functional, and independent of the boundary inputs $\omega_a$ and $\omega_b$. So, enforcing the stabilization condition \(\dot{V}+\alpha(V(u(t))) \le 0\) for a class \(\mathcal{K}\) function \(\alpha\) reduces to ensuring:
\begin{align}
    (u(t,a)-u^*) f(u(t,a)) - (u(t,b)-u^*) f(u(t,b))\notag\\
    - F(u(t,a)) + F(u(t,b)) \le -\alpha(V(u(t))),\label{eq:boundary-cond-summary}
\end{align}
where $F$ is any primitive of the 
flux~$f$.  
Classical stabilizing boundary controls enforcing \eqref{eq:boundary-cond-summary} typically depend on the boundary traces of the entropy solution, the initial data and are non-local (\cite{7509658}).

\subsection{Barrier-Based Invariance} 

The invariance problem consists in determining boundary inputs \(\omega_a\) and \(\omega_b\) that ensure \eqref{eq1}–\eqref{eq2} admit a solution satisfying a certain safety condition for all times. Assuming such  a condition aims to keep the solution to~\eqref{eq1}–\eqref{eq2}
bounded in a suitable norm, we can express this objective via the following boundary control barrier functional (BCBFal), already used in \cite{chiri2025boundary}:
\begin{equation}
    \label{eq:B-summary}
    B(u(t)) = \bar{u}^2 -\int_a^b u(t,x)^2dx , 
\end{equation}
whose zero superlevel set defines a set of states \(u\) of \(L^2\)-norm bounded by \(\bar{u}\). 
The use of a different norm in the definition of the barrier functional would be more direct, for example imposing an $L^\infty$ bound on the state $u$. However, this introduces additional challenges in terms of the regularity of the resulting barrier functional. 

Using a similar process as in the previous section, we can determine \(\dot{B}\) explicitly, and reduce the invariance condition \(\dot{B} +\beta(B(u))\ge0\) for a class \(\mathcal{K}\) function \(\beta\) to ensuring:
\begin{align}
    u(t,a)f(u(t,a)) - u(t,b) f(u(t,b))\notag\\
    - F(u(t,a)) + F(u(t,b)) \le \beta(B(u(t))),\label{eq:boundary-cond-summaryinvariance}
\end{align}
For ease of calculation, we define the functions
\begin{itemize}[label={}, leftmargin=*]
    \item \(g(s,z) = (s-u^*)f(s) -(z-u^*)f(z) - F(s)+F(z)\),
    \item \(k(s,z) = sf(s) -zf(z) - F(s)+F(z)\),
\end{itemize}
which correlate to \eqref{eq:boundary-cond-summary} and \eqref{eq:boundary-cond-summaryinvariance} respectively, by setting \(s= u(t,a)\) and \(z=u(t,b)\).
\subsection{Intervals of convexity}
Our ultimate goal is to construct boundary controls via convex optimization programs that simultaneously enforce stability and invariance. To this end, we first identify the intervals over which $\dot V$ and $-\dot B$ are convex with respect to the boundary controls $u(t,a)$ and $u(t,b)$.
For $\dot V$, we compute the second derivatives of the left hand side in~\eqref{eq:boundary-cond-summary} with respect to $u(t,a)$ and $u(t,b)$. The intervals of convexity correspond to the regions where these second derivatives are nonnegative:
\begin{itemize}[label={}, leftmargin=*]
    \item \(\mathcal{C}_a\): interval within \([0,u_{\max}] \) where \(\dot{V}\) is convex in $u(t,a)$, i.e., $(u-u^*) f''(u) + f'(u) \ge 0$.
    \item \(\mathcal{C}_b\): interval within \([0,u_{\max}] \) where \(\dot{V}\) is convex in $u(t,b)$, i.e., $(u-u^*) f''(u) + f'(u) \le 0$.
\end{itemize}
It is clear that the intersection $\mathcal{C}_a\cap \mathcal{C}_b$ only consists of points where $(u-u^*) f''(u) + f'(u) = 0$, and $\mathcal{C}_a\cup \mathcal{C}_b = [0,u_{\max}]$. 
Repeating the same process, but for \(-\dot{B}\) by taking the second derivative of \eqref{eq:boundary-cond-summaryinvariance}, we get
\begin{itemize}[label={}, leftmargin=*]
    \item \(\mathcal{I}_a\): interval within \([0,u_{\max}] \) where $-\dot{B}$ is convex in $u(t,a)$, i.e., $u f''(u) + f'(u) \ge 0$.
    \item \(\mathcal{I}_b\): interval within \([0,u_{\max}] \) where $-\dot{B}$ is convex in $u(t,b)$, i.e., $u f''(u) + f'(u) \le 0$.
\end{itemize}

\begin{remark}\label{rem:covexintervals}
We can leverage the properties of the flux $f$ to determine the structure of the intervals of convexity of $\dot{V}$ and $-\dot{B}$, respectively. 
We can observe that \(0\in\mathcal{C}_a\) and \( 0\in\mathcal{I}_a\).
Indeed, since $f''(u)<0$ and $f'(u)>0$ on $[0,\hat u]$, both
$(u-u^*) f''(u) + f'(u)$ and $u f''(u) + f'(u)$ are positive in $u=0$.
Similarly, we can observe that \(u_{\max}\in \mathcal{C}_b\) and \( u_{\max}\in \mathcal{I}_b \). Indeed, \(f''(u) \) and \(f'(u)\) are both negative on \([\hat{u}, u_{\max}]\), therefore $(u-u^*) f''(u) + f'(u)$ and $u f''(u) + f'(u)$ are negative in $u=u_{\max}$.
Next, by following a similar argument, we can check what intervals include \(\hat{u}\). In doing so, we get \(\hat{u} \in \mathcal{C}_a \) if and only if \( \hat{u} \leq u^* \). Whilst, we get \( \hat{u} \in \mathcal{C}_b \) if and only if \( \hat{u} \geq u^* \). Finally, \(\hat{u} \notin \mathcal{I}_a\), regardless of \(u^*\), which by definition ensures \(\hat{u} \in \mathcal{I}_b\).
\end{remark}
Depending on the specific form of $f(u)$, some of the intervals $\mathcal{C}_a$, $\mathcal{C}_b$, $\mathcal{I}_a$, and $\mathcal{I}_b$ may consist of unions of multiple disjoint sets rather than a single contiguous interval. For the sake of simplicity in the calculations that follow, we will assume that each interval is contiguous, although all results and proofs remain valid in the general, non-contiguous case. Thus, from now on we assume they have the following form:
\begin{equation*}
\mathcal{C}_a=\left[0,u_1 \right], \ \mathcal{C}_b =\left[u_1, u_{\max} \right],\ u_1 \ \text{solves } u+\frac{f'(u)}{f''(u)} = u^*  ,
\end{equation*}
\begin{equation*}
\mathcal{I}_a=\left[0,u_2 \right], \  \mathcal{I}_b =\left[u_2, u_{\max}  \right],\ u_2 \ \text{solves } u +\frac{f'(u)}{f''(u)} = 0 .
\end{equation*}

\begin{remark}[Proving \(\mathcal{I}_a\cap \mathcal{C}_a = \mathcal{I}_a\) and \(\mathcal{I}_b\cap \mathcal{C}_b = \mathcal{C}_b\)]
\label{rem:Rmk1}
From the definitions of the convexity intervals of $\dot V$ and $-\dot B$, we show that $\mathcal{I}_a \subseteq \mathcal{C}_a$. Indeed, $u \in \mathcal{C}_a$ if and only if $(u - u^*) f''(u) + f'(u) \ge 0$, while $u \in \mathcal{I}_a$ if and only if $u f''(u) + f'(u) \ge 0$. Since $u^* \in [0, u_{\max}]$ and $f''(u) < 0$, the term $-u^* f''(u)$ is non-negative, which immediately implies $\mathcal{I}_a \subseteq \mathcal{C}_a$. By a similar argument, one can also verify that $\mathcal{C}_b \subseteq \mathcal{I}_b$.

Because the inequalities defining $\mathcal{C}_a$, $\mathcal{I}_a$, $\mathcal{C}_b$, and $\mathcal{I}_b$ are pointwise in $u$, these inclusion relations hold even if the sets consist of disjoint unions of intervals.
\end{remark}

Finally, we define the functions $C(t) = \alpha(V(u(t))) \ge 0$ and $D(t) = \beta(B(u(t))) \ge 0$ for some class $\mathcal{K}$ functions $\alpha$ and $\beta$. These will serve as the decay thresholds for stabilization and invariance, respectively.

\section{Single Input Controller Feasibility}
\label{section 3}

In this section, we characterize the conditions under which a single boundary control input is sufficient to guarantee both stability and invariance. Such an input may be applied either at the left boundary---by selecting a value in $\mathcal{C}_a \cap \mathcal{I}_a =\mathcal{I}_a $---or at the right boundary, by selecting a value in $\mathcal{C}_b \cap \mathcal{I}_b =\mathcal{C}_b$ (see Remark~\ref{rem:Rmk1}). 

At a given time $t\in [0,T]$, at the left boundary, we seek a control input 
$\omega_a(t) \in\mathcal{I}_a$ that ensures \(g(\omega_a(t), z) \leq -C(t)\) and \(k(\omega_a(t),  z) \leq D(t)\) for the right boundary trace $z=u(t,b)$. This leads to the convex optimization problem
\begin{equation}
\label{eq23}
\begin{aligned}
    \min_{\omega_a \in\mathcal{I}_a}  \big\{\omega_a^2: g\big(\omega_a,\, u(t,b)\big) \le -C(t), \\k\big(\omega_a,\, u(t,b)\big) \le D(t)\big\}.
\end{aligned}
\end{equation}
Analogously, at the right boundary we seek 
$\omega_b(t) \in \mathcal{C}_b$ satisfying
\(g(u(t,a), \omega_b(t)) \le -C(t)\) and \(k(u(t,a),   \omega_b(t)) \le D(t)\) for the left side boundary trace $s=u(t,a)$. This leads to the convex optimization problem
\begin{equation}
\label{eq24}
\begin{aligned}
    \min_{\omega_b \in \mathcal{C}_b} \big\{\omega_b^2 :g\big(u(t,a),\, \omega_b\big) \le -C(t),\\
     k\big(u(t,a),\, \omega_b\big) \le D(t)\big\}.
\end{aligned}
\end{equation}
We minimize the square of the input to turn this into a convex optimization problem, where the controller selects the minimal input so that the constraints are satisfied.

\subsection{Left-sided one-input feasibility}
In this section, we examine the feasibility conditions for implementing a single controller at the left boundary by analyzing the optimization problem \eqref{eq23}. Our goal is to find the minimum $\omega_a(t) \in \mathcal{I}_a$ such that both the stability and invariance inequalities hold simultaneously:
\begin{equation*}
g(\omega_a(t), z) \le -C(t), \quad k(\omega_a(t), z) \le D(t).
\end{equation*}

Let $s := \omega_a(t)$ and the right boundary value $z := u(t,b)$ at time $t$ be fixed. Then, the convex optimization problem~\eqref{eq23} can be recast as minimizing a quadratic objective function 
over the set
\begin{equation*}
\mathcal{S} = \{ s \in \mathcal{I}_a : g(s,z) \le -C(t), \ k(s,z) \le D(t) \}.
\end{equation*}
Since $0 \in \mathcal{I}_a$, we know that $s^* = 0$ is optimal if it satisfies the constraints. Assume instead that $0 \notin \mathcal{S}$, so $s^* \neq 0$. In this case, the minimizer cannot be interior to $\mathcal{S}$, and thus $s^*$ lies on the boundary of the feasible set. That is, either one or both constraints must be active:
\begin{equation*}
g(s,z) = -C(t) \quad \text{and/or} \quad k(s,z) = D(t).
\end{equation*}
We define the sets corresponding to each active constraint,
$$
\mathcal{S}_g = \{ s : g(s,z) = -C(t) \}, \quad
\mathcal{S}_k = \{ s : k(s,z) = D(t) \},
$$
and the set of feasible boundary solutions,
$$
\mathcal{U} = \{ s \in \mathcal{S}_g : k(s,z) \le D(t) \} \cup 
\{ s \in \mathcal{S}_k : g(s,z) \le -C(t) \}.
$$
If $\mathcal{U} = \emptyset$, then the problem is infeasible. If $0 \in \mathcal{U}$, then $s^* = 0$. Otherwise, the minimizer $s^* = \omega_a^*(t)$ lies on the boundary and satisfies one of the following:
\begin{equation}
\label{eq25}
g(\omega_a^*(t), z) = -C(t), \quad k(\omega_a^*(t), z) \le D(t),
\end{equation}
\begin{equation}
\label{eq26}
g(\omega_a^*(t), z) \le -C(t), \quad k(\omega_a^*(t), z) = D(t).
\end{equation}

This provides the feasibility constraints. Let $\bar{\mathcal{U}}$ denote the set of optimal solutions satisfying both \eqref{eq25} and \eqref{eq26}.

\begin{theorem}
There exist    \( \ \bar{u}, \ u^* \in [0, u_{\max}] \) and class \(\mathcal{K}\) functions \(\alpha, \ \beta \ \) such that  the convex optimization problem~\eqref{eq23} is feasible.
\end{theorem}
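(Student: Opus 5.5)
The statement is existential in the design parameters $\bar u$, $u^*$, $\alpha$, $\beta$. I will read it the way problem~\eqref{eq23} is posed: at a fixed time $t$ the state $u(t,\cdot)$ is given, so the right trace $z=u(t,b)$ and the integrals in $V$ and $B$ are fixed numbers. The goal is then to choose the parameters so that the set $\mathcal{S}$ is nonempty. Given nonemptiness, existence of the minimizer is routine. $\mathcal{I}_a=[0,u_2]$ is compact, $g(\cdot,z)$ and $k(\cdot,z)$ are continuous (since $f\in\mathcal{C}^2$), and they are convex on $\mathcal{I}_a$ by the computation defining $\mathcal{I}_a\subseteq\mathcal{C}_a$ in Remark~\ref{rem:Rmk1}. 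Hence $\mathcal{S}$ is a compact interval and $s\mapsto s^2$ attains its minimum on it.

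\textbf{Step 1 (decoupling via $u^*=0$).} I normalize $F(s)=\int_0^s f$ and set $u^*=0$. Then $g\equiv k$, both equal to $K(s)-K(z)$ with $K(s)=sf(s)-F(s)$. Since $K'(s)=sf'(s)$, the function $K$ is nondecreasing on $[0,\hat u]\supseteq\mathcal{I}_a$ (recall $u_2<\hat u$ by Remark~\ref{rem:covexintervals}). Thus the best candidate is $s=0$, for which $g(0,z)=k(0,z)=-K(z)$.

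\textbf{Step 2 (choosing $\bar u$, $\beta$).} I take $\bar u^2>\int_a^b u(t,x)^2dx$, which is possible because $u\le u_{\max}$ on a bounded interval. Then $B>0$ and $D(t)=\beta(B)>0$ for any class $\mathcal{K}$ function $\beta$, e.g.\ $\beta(r)=r$. The invariance constraint $-K(z)\le D(t)$ then follows from the stability constraint $-K(z)\le -C(t)$, because $-C(t)\le 0<D(t)$. So only the stability constraint needs to be verified.

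\textbf{Step 3 (choosing $\alpha$).} The map $K$ vanishes at $0$, increases on $[0,\hat u]$, and decreases on $[\hat u,u_{\max}]$ down to $K(u_{\max})=-F(u_{\max})<0$. Hence there is a unique $z_0\in(\hat u,u_{\max})$ such that $K>0$ on $(0,z_0)$. If $z\in(0,z_0)$, I take $\alpha(r)=\varepsilon r$ with $\varepsilon\le K(z)/V$ (any $\varepsilon$ works if $V=0$). This gives $g(0,z)\le -C(t)$, so $0\in\mathcal{S}$, and $s^*=0$ is optimal. If $z=0$ and $V=0$, the choice $s=0$ is trivially feasible.

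\textbf{Step 4 (main obstacle: $z\ge z_0$).} Here $u^*=0$ fails, and more generally $s=0$ fails for every $u^*\in[0,u_{\max}]$. Indeed $g(0,z)=F(z)-(z-u^*)f(z)\ge F(z)-zf(z)\ge0$. I would instead exploit the term $-u^*(f(s)-f(z))$ in
\[
g(s,z)=K(s)-K(z)-u^*\bigl(f(s)-f(z)\bigr),
\]
taking $u^*=u_{\max}$ and $s=u_2$, where $f(u_2)>f(z)$ because $z$ lies deep in the congested branch. At $z=u_{\max}$ this reduces to
\[
g(u_2,u_{\max})=\int_{u_2}^{u_{\max}}\bigl(f(r)-f(u_2)\bigr)\,dr.
\]
For feasibility I need this, and its continuation for $z\in[z_0,u_{\max}]$, to be negative; I would try to establish that sign using concavity of $f$ together with the defining relation $u_2f''(u_2)+f'(u_2)=0$.

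Next I would check $k(u_2,z)\le D(t)$ by enlarging $\bar u$ and using a steep $\beta$, e.g.\ $\beta(r)=Mr$ with $M$ large. Finally I would pick $\alpha$ small exactly as in Step 3. If the sign condition fails for some flux, I would instead optimize over $u^*\in[0,u_{\max}]$ and $s\in[0,u_2]$ jointly: minimize $g(s,z)$ and look for a strictly negative value. I expect this step, controlling $g$ when the right trace is congested, to be the genuine difficulty.
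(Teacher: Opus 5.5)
\textbf{The gap is Step 4, and it cannot be closed.} The claim you aim at there is that for every state at time $t$, some choice of $\bar u,u^*,\alpha,\beta$ makes \eqref{eq23} feasible. This is false, already for the Greenshields flux of Example~\ref{example1}. Take $f(u)=u(1-u)$ and $u_{\max}=1$, so $u_2=1/4$, $K(z)=z^2/2-2z^3/3$ and $z_0=3/4$. Since $g(s,z)=K(s)-K(z)-u^*(f(s)-f(z))$ is affine in $u^*$, its minimum over $(u^*,s)\in[0,1]\times[0,1/4]$ is the smaller of the minima over $s$ at $u^*=0$ and at $u^*=1$.

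At $u^*=0$ the minimum is $-K(z)$, which is $\ge 0$ on $[3/4,1]$. At $u^*=1$, $p'(s)=(s-1)f'(s)<0$ puts the minimizer at $s=u_2$, with value $\phi(z)=-\tfrac16+z-\tfrac32 z^2+\tfrac23 z^3$. Here $\phi(1)$ is exactly your integral $\int_{u_2}^{u_{\max}}(f(r)-f(u_2))\,dr$, and it equals $0$ (this holds for every quadratic flux). Moreover $\phi'(z)=(2z-1)(z-1)<0$ on $(1/2,1)$, so $\phi>0$ on $[1/2,1)$.

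Hence for $u(t,b)\in(3/4,1)$ you get $g(s,u(t,b))>0\ge -C(t)$ for all $s\in\mathcal{I}_a$ and all $u^*$. So \eqref{eq23} is infeasible for every choice of $\bar u,u^*,\alpha,\beta$. At $u(t,b)\in\{3/4,1\}$ the minimum is $0$, so feasibility would require $C(t)=0$. Neither the hoped-for negative sign of your integral nor the fallback joint minimization over $(u^*,s)$ can rescue the congested regime $u(t,b)\ge z_0$.

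\textbf{What survives is correct and differs from the paper.} The paper never fixes the parameters. It only studies the monotonicity of $p(s)=g(s,u(t,b))$ and $\ell(s)=k(s,u(t,b))$ on $[0,u_2]$. It then lists when a minimizer exists (cases a, b) and when it does not (cases c, and $\ell(0)>D(t)$), so it proves feasibility only conditionally on those inequalities. Your choice $u^*=0$ merges the two constraints into $-K(u(t,b))\le -C(t)$. Together with a small linear $\alpha$, this gives an explicit witness whenever $u(t,b)\in(0,z_0)$. That is enough for an existential reading of the theorem. You should therefore drop Step 4 and state the restriction on $u(t,b)$ explicitly, rather than claim feasibility for every state.

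\textbf{Two smaller omissions in Steps 2--3.}
\begin{enumerate}
\item The case $u(t,b)=0$ with $V>0$ is not covered by $u^*=0$, because then $g(s,0)=K(s)\ge 0$ on $\mathcal{I}_a$. It needs $u^*>0$, which makes $g(s,0)<0$ for small $s>0$. But it also forces $k(s,0)=K(s)>0$, and hence requires $B>0$ and a steep $\beta$.
\item Choosing $\bar u^2>\int_a^b u^2\,dx$ is incompatible with $\bar u\le u_{\max}$ when $\int_a^b u^2\,dx\ge u_{\max}^2$, which is possible as soon as $b-a>1$. For Step 3 you only need $B\ge 0$.
\end{enumerate}
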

\begin{proof}
For ease of calculation, let us define \(p(s)=g(s,u(t,b)) \) and \(\ell(s)= k(s,u(t,b))\) for a fixed \(z=u(t,b)\):
\begin{itemize}[label={}, leftmargin=*]

    \item \(p(s) = (s-u^*)f(s)-(z - u^*)f(z)-F(s) + F(z)\),
    \item \(\ell(s) =  s f(s) - z f(z) - F(s) + F(z) \).
\end{itemize}
Trivially, $\omega_a(t)^2$ is minimized by $\omega_a(t) = 0$, so we always start by checking whether this value is admissible. Else, \(\omega_a^*(t)\) lies at the boundary of the constraint sets, either \eqref{eq25} or \eqref{eq26}. Starting with equation \eqref{eq25}, we investigate the existence of solutions to \(g(\omega_a^*(t), \hat{z}) = -C(t)\) for some \(\hat{z}\) such that $k(\omega_a^*(t), \hat{z}) \le D(t) \). 
Thus, we look at the stability conditions in terms of \(p(s)\), with derivative \(p'(s) = (s - u^*)f'(s) \).
 
\textbf{Case 1: \(u^* \leq u_2. \quad \)}For \(s\in \left[0, u^*  \right)\), we know \((s - u^*)\) is negative and \(f'(s) \) is positive. However, for
\(s\in  \left(u^*, u_2 \right] \), both factors are positive. Therefore, 
\(p(s)\) is strictly decreasing from \( \left[0, u^*  \right)\) and strictly increasing from \(\left(u^*, u_2  \right] \). 
Hence, $p(s)$ attains a minimum at $s = u^*$, with $p(u^*) = p_{\min}$.

    \begin{itemize}[label={}, leftmargin=*]
            \item \textbf{a)} If \(p(0) \leq -C(t) \),  the optimal \(s\) for \eqref{eq25} is \(\omega_a^*(t)=0\).
            \item \textbf{b)} If \(p(u^*) \leq -C(t)< p(0) \), an admissible solution to \eqref{eq25} exists, equal to the root \(s^*\) of \(p(s)=-C(t)\) with the most minimal norm.
            \item \textbf{c)} If \(p(u^*) > -C(t) \), that means \(p_{\min}\) is greater than \(-C(t)\). Thus, no solution exists for \eqref{eq25} and we move onto finding a solution to \eqref{eq26}.
        \end{itemize}

\textbf{Case 2: \(u^* \geq u_2, \quad \)}For \(s\in \left[0,  u_2 \right] \),  we know \((s - u^*)\) is negative and \(f'(s) \) is positive. This entails that \(p'(s)\) is always negative for \(s\in \left[0,u_2  \right] \), which allows us to conclude that \(p(s)\) is strictly decreasing from \( \left[0,u_2  \right] \). Consequently, $p(s)$ attains its minimum at $s = u_2$, with $p(u_2) = p_{\min}$.

    \begin{itemize}[label={}, leftmargin=*]
            \item \textbf{a)} If \(p(0) \leq -C(t) \), the optimal \(s\) for \eqref{eq25} is \(\omega_a^*(t)=0\).
            \item \textbf{b)} If \(p(u_2) \leq -C(t)< p(0) \), there is an admissible solution for \eqref{eq25}, equal to the root \(s^*\) of \(p(s)=-C(t)\) with the most minimal norm. 
            \item \textbf{c)} If \(p(u_2) > -C(t) \), that means \(p_{\min}\) is greater than \(-C(t)\). Thus,  no solution exists for \eqref{eq25} and we move onto finding a solution to \eqref{eq26}.
        \end{itemize}
    All admissible solutions for \eqref{eq25} are put into \(\mathcal{\bar{U}}\). If no solution exists, we move onto equation \eqref{eq26}.

For equation \eqref{eq26}, we investigate the existence of solutions to \(k(\omega_a^*(t),z) = D(t)\) for some \(\hat{z}\) such that \(g(\omega_a^*(t), z) \le -C(t)\). Therefore, we move on to \(\ell(s)\) with derivative \(\ell'(s) = s f'(s)\).

\textbf{Case 1:  \( \ell(0) \leq D(t). \quad \)} In this case, 
\(\omega_a(t)=0\) is feasible, and so 
the optimal solution for~\eqref{eq26} is \(\omega^*_a(t)=0\).
   
\textbf{Case 2: \(\ell(0) > D(t). \quad \)} Notice that \(\ell'(s) \geq 0\) on the interval \(\mathcal{I}_a = \left[0, u_2 \right]\), because \(f'(s)  > 0\) on  \(\left[0, u_2 \right]\). Thus \(\ell(s)\) is strictly increasing, such that \(\ell(0) = \ell_{\min}\) on \(\mathcal{I}_a\). 
 This implies that there is no solution to \eqref{eq26}.

All valid solutions are put into \(\mathcal{\bar{U}}\). Once we have the set of admissible solutions, we choose the solution with minimal square value.
\end{proof}

\subsection{Right-sided one-input feasibility}

In this section, we examine the feasibility requirements for implementing a single controller at the right boundary by analyzing equation \eqref{eq24}. Our goal is to find the minimum $\omega_b(t) \in \mathcal{C}_b$ such that both the stability and invariance inequalities hold simultaneously:
$$
g(s, \omega_b(t)) \le -C(t), \quad k(s, \omega_b(t)) \le D(t).
$$

Following a process analogous to the left-boundary case, we define the feasibility constraints as follows. If $z = u_1$ satisfies the constraints, then $z^* = u_1$. Otherwise, the minimizer $z^* = \omega_b^*(t)$ lies on the boundary and satisfies one of
\begin{equation}
\label{eq27}
g(s, \omega_b^*(t)) = -C(t), \quad k(s, \omega_b^*(t)) \le D(t),
\end{equation}
\begin{equation}
\label{eq28}
g(s, \omega_b^*(t)) \le -C(t), \quad k(s, \omega_b^*(t)) = D(t).
\end{equation}

Let $\bar{\mathcal{W}}$ denote the set of solutions satisfying both \eqref{eq27} and~\eqref{eq28}.

\begin{theorem}
   There exist \( \ \bar{u}, \ u^* \in [0, u_{\max}] \) and class \(\mathcal{K}\) functions \(\alpha, \ \beta \ \) such that the convex optimization problem \eqref{eq24} is feasible.
\end{theorem}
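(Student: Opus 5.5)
We mirror the argument for the left-sided theorem, now with the left trace $s=u(t,a)$ fixed and the decision variable $z=\omega_b(t)\in\mathcal{C}_b=[u_1,u_{\max}]$. Define
\[
q(z)=g(s,z)=(s-u^*)f(s)-(z-u^*)f(z)-F(s)+F(z),
\qquad
m(z)=k(s,z)=sf(s)-zf(z)-F(s)+F(z).
\]
Differentiating and using $F'=f$ gives
\[
q'(z)=-(z-u^*)f'(z),\qquad m'(z)=-zf'(z).
\]
The first step is to record the monotonicity of $q$ and $m$ on $\mathcal{C}_b$. By Remark~\ref{rem:covexintervals}, $\hat u\in\mathcal{C}_b$ iff $\hat u\ge u^*$, and $u_{\max}\in\mathcal{C}_b$. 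Since $f'>0$ on $[0,\hat u)$ and $f'<0$ on $(\hat u,u_{\max}]$, the sign of $q'$ is governed by the relative position of $u^*$, $\hat u$ and $u_1$.

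\textbf{Monotonicity of $m$.} For $z>\hat u$ we have $m'(z)>0$, and for $z<\hat u$ we have $m'(z)<0$. Hence $m$ attains its minimum over $\mathcal{C}_b$ at $z=\max(u_1,\hat u)$.

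\textbf{Monotonicity of $q$.} Split into cases according to the order of $u_1$, $u^*$ and $\hat u$; these play the roles of Cases~1 and~2 in the left-sided proof.
\begin{itemize}[label={}, leftmargin=*]
\item \textbf{Case $u^*\le \hat u$.} Then $q$ is decreasing on $[u_1,u^*]$ (if that interval is nonempty), increasing on $[u^*,\hat u]$, and decreasing on $[\hat u,u_{\max}]$. The candidate minimizers are therefore $u_1$ (or $u^*$) and $u_{\max}$.
\item \textbf{Case $u^*>\hat u$.} Here $\hat u\notin\mathcal{C}_b$, so $u_1>\hat u$. On $\mathcal{C}_b$ we have $f'<0$, hence $q$ is increasing on $[u_1,u^*]$ and decreasing on $[u^*,u_{\max}]$. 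The minimum of $q$ is attained at an endpoint, $u_1$ or $u_{\max}$.
\end{itemize}
In each case we classify the subcases exactly as in the left-sided proof: (a) $z=u_1$ already satisfies the constraint; (b) $q_{\min}\le -C(t)$, in which case we take the root of $q(z)=-C(t)$ of least norm; (c) no solution of \eqref{eq27} exists. The analogous analysis for $m$ handles \eqref{eq28}. The admissible points are collected in $\bar{\mathcal W}$.

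\textbf{Existence of parameters.} This part proves the statement itself, and we plan an explicit choice. Take $u^*=u_{\max}$. Then $q(u_{\max})=(s-u_{\max})f(s)-F(s)+F(u_{\max})$, while $q'(z)=(u_{\max}-z)f'(z)$. So $z=u_{\max}$ solves the stability constraint whenever $C(t)$ is small enough, which can be arranged by choosing $\alpha$ with small slope, e.g. $\alpha(r)=\varepsilon r$. Concavity of $f$ gives $\int_s^{u_{\max}}f>(u_{\max}-s)f(s)$ for $s<u_{\max}$, hence $q(u_{\max})<0$. A sufficiently small $\varepsilon$, using the uniform bound on $V$, then yields $q(u_{\max})\le -C(t)$.

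For the barrier constraint,
\[
m(u_{\max})=sf(s)-F(s)+F(u_{\max})=\int_s^{u_{\max}}f+sf(s)\ge 0 .
\]
This inequality must be dominated by $D(t)=\beta(B(u(t)))$. We handle it by choosing $\bar u$ large, e.g. $\bar u^2$ exceeding $u_{\max}^2(b-a)$ plus a margin, so that $B$ is bounded below by a positive constant. Then $\beta(r)=Mr$ with $M$ large makes $D(t)$ exceed the bounded quantity $\max_s m(u_{\max})$.

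With these choices $z=u_{\max}\in\mathcal{C}_b$ is feasible for every $t$, so \eqref{eq24} is feasible, and by convexity the minimizer lies in $\bar{\mathcal W}$ or equals $u_1$.

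\textbf{Main obstacle.} The delicate step is guaranteeing $q(u_{\max})\le -C(t)$ uniformly in time. When $s=u_{\max}$, $q(u_{\max})=0$, and $C(t)$ cannot be zero unless $V=0$. We would try to handle this using that $u(t,a)=u_{\max}=u^*$ at the boundary, or alternatively restrict to traces with $s$ bounded away from $u_{\max}$, or let $\alpha$ be chosen adaptively. If that fails, we would fall back to the weaker formulation the paper seems to intend: feasibility holds for parameters satisfying the case conditions (b) above.
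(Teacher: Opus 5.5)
Your explicit construction, which is the only part of the proposal that actually addresses the existence claim, fails at its key step. With $u^*=u_{\max}$ one has $q(u_{\max})=\int_s^{u_{\max}}f-(u_{\max}-s)f(s)$, so the inequality you quote would give $q(u_{\max})>0$, not $<0$. That inequality also does not follow from concavity: for $s\ge\hat{u}$ the reverse holds, since $f$ is decreasing on $[s,u_{\max}]$. For small traces $q(u_{\max})$ really is positive. At $s=0$ it equals $\int_0^{u_{\max}}f>0$. Moreover $u_1\in(\hat{u},u_{\max})$ and $q'(z)=(u_{\max}-z)f'(z)\le 0$ on $\mathcal{C}_b$, so $q>0$ on all of $\mathcal{C}_b$, and the stability constraint in \eqref{eq24} fails for every class $\mathcal{K}$ function $\alpha$. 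Concretely, for $f(u)=u(1-u)$ and $u^*=u_{\max}=1$ one gets $\mathcal{C}_b=[3/4,1]$ and $q(1)=0.081$ at $s=0.1$, which is the initial left value in the paper's simulations. Your choice of $\bar{u}$ is also inadmissible, since $\bar{u}^2>u_{\max}^2(b-a)$ contradicts $\bar{u}\in[0,u_{\max}]$ whenever $b-a\ge 1$. Finally, the obstacle you flag at the end is structural, not technical. Write $g(s,z)=G(s)-G(z)$ with $G(w)=(w-u^*)f(w)-F(w)$. Any trace $s$ that maximizes $G$ over $[0,u_{\max}]$ gives $g(s,z)\ge 0>-C(t)$ for every $z$ as soon as $V(u(t))>0$. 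Hence no fixed choice of $u^*$ and $\alpha$ makes \eqref{eq24} feasible uniformly over left traces and times; the statement can only be established state by state or under conditions on the data.

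That is essentially what the paper's proof does. It locates the minima of $q$ and $\rho$ (your $m$) on $\mathcal{C}_b$ and reads off feasibility from conditions such as $q_{\min}\le -C(t)$ and $\rho_{\min}\le D(t)$. However, your version of this fallback gets $q$ backwards in both cases.
\begin{itemize}
\item When $u^*\le\hat{u}$, the expression $(u-u^*)f''(u)+f'(u)$ is $\ge 0$ at $u^*$ and $\le 0$ at $\hat{u}$, so $u_1\in[u^*,\hat{u}]$. Then $q'(z)=-(z-u^*)f'(z)$ is $\le 0$ on $[u_1,\hat{u}]$ and $\ge 0$ on $[\hat{u},u_{\max}]$, so the minimum is at $\hat{u}$.
\item When $u^*>\hat{u}$, one has $u_1\in(\hat{u},u^*)$. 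Then $q$ decreases on $[u_1,u^*]$ and increases on $[u^*,u_{\max}]$, so the minimum is at $u^*$.
\end{itemize}
The endpoints $u_1,u_{\max}$ that you list as candidate minimizers are local maxima, so your subcase (b) tests the wrong value; your analysis of $m$ is fine. If you want a genuine existence argument for the literal statement, let the parameters depend on the current state. For a trace $s\neq\hat{u}$, taking $u^*=0$ gives $g=k$ and $\mathcal{C}_b=\mathcal{I}_b\ni\hat{u}$, with $g(s,\hat{u})=K(s)-K(\hat{u})<0$ for $K(w)=wf(w)-F(w)$. So $z=\hat{u}$ satisfies both constraints once $\alpha(V(u(t)))\le K(\hat{u})-K(s)$ and $\bar{u}$ is chosen with $B(u(t))\ge 0$.
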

\begin{proof}

For ease of calculation, let us define 
\(q(z)=g(u(t,a),z)\) and \(\rho(z)=k(u(t,a),z)\) for a fixed \(s=u(t,a)\):
\begin{itemize}[label={}, leftmargin=*]
    \item \(q(z) = (s - u^*)f(s) - (z - u^*)f(z) - F(s) + F(z)\),
    \item \(\rho(z) = s f(s) - z f(z) - F(s) + F(z) \).
\end{itemize}
 Trivially, \( \omega_b(t)^2 \) is minimized by \(\omega_b(t)=u_1\), so we will always start by checking \(u_1\). Starting with equation \eqref{eq27}, we investigate the existence of solutions to \(g(s, \omega_b^*(t)) = -C(t)\), for some \(\hat{s}\) such that \(k(s, \omega_b^*(t)) \le D(t)\). Hence, we look at the stability conditions of \(q(z)\), with derivative \(    q'(z) = -(z - u^*)f'(z)  \). For case 1 we will assume \(u^*<\hat{u}\), but repeating the steps assuming \(u^*>\hat{u}\) grants identical results, except with \(u^* \) in the cases instead of \(\hat{u}\).

\textbf{Case 1:  \( u^* < \hat{u}, \quad\) } 
For \(z \in \left[u_1,u^* \right)\), both \(-(z-u^*)\)  and \(f'(z)\) are positive. But, for \(z\in  \left(u^*, \hat{u}  \right) \), \(-(z-u^*)\) is negative, whereas \(f'(z)\) is still positive. Finally, for \(z\in  \left(\hat{u}, u_{\max} \right] \), both \(-(z-u^*)\)  and \(f'(z)\) are negative. 
This tells us \(q(z)\) is strictly increasing from \( \left[0, u^*  \right)\), strictly decreasing from \(\left(u^*, \hat{u}  \right) \), and strictly increasing from \( \left[\hat{u}, u_{\max} \right) \), such that \(q(z) = -C(t)\) has at most two roots. So, it has 
a minimum at \(\hat{u}\), such that  \(q(\hat{u}) = q_{\min}\).  
    
            \begin{itemize}[label={}, leftmargin=*]
                \item \textbf{a)} If \(q(u_1) \leq -C(t) \), the optimal \(z\) for \eqref{eq27} is \( \omega_b^*(t)=u_1\).
                \item \textbf{b)} If \(q(\hat{u}) \leq -C(t)< q(u_1) \), an admissible solution exists for \eqref{eq27}, which is the  root \(z\) of \(q(z) =-C(t)\) with the most minimal norm. 
                \item \textbf{c)} If \(q(\hat{u}) > -C(t) \), that means \(q_{\min}\) is greater than \(-C(t)\). Thus, no solution exists for \eqref{eq27} and we move onto finding a solution to \eqref{eq28}.
            \end{itemize}

    
\textbf{Case 2:  \( u^* = \hat{u}, \quad \) } For \(z\in \left[0, \hat{u}  \right)\), both \(-(z-u^*)\) and \(f'(z)\) are  positive, while for \(z\in  \left(\hat{u}, u_{\max} \right] \), both \(-(z-u^*)\) and \(f'(z)\) are negative. Therefore, \(q'(z)\) is always non-negative and 
\(q(z)\) is strictly increasing. 
This means \(q(z)\) has a maximum at \(u_{\max}\) and a minimum at \(u_1\), such that \(q(u_1) = q_{\min}\).  
        \begin{itemize}[label={}, leftmargin=*]
            \item \textbf{a)} If \(q(u_1) \leq -C(t) \), the optimal \(z\) for \eqref{eq27} is \( \omega_b^*(t)=u_1\).
 
            \item \textbf{b)} If \(q(u_1) > -C(t) \), that means \(q_{\min}\) is greater than \(-C(t)\). Thus, no solution exists for \eqref{eq27} and we move onto finding a solution to \eqref{eq28}.
        \end{itemize}

    All admissible solutions for \eqref{eq27} are put into \(\mathcal{\bar{W}}\). If no solution exists, we move onto equation \eqref{eq28}.

For equation \eqref{eq28}, we investigate the existence of solutions to \(k(s,\omega_b^*(t)) = D(t)\), for some \(\hat{s}\) such that \(g(s, \omega_b^*(t)) \le -C(t)\). Therefore we move on to \(\rho (z)\), with derivative \(\rho'(z) = -zf'(z)\).

 \textbf{Case 1:  \( u_1 \leq \hat{u}, \ \quad \)}
    \(\rho'(z)\) is always negative from \(\left[u_1, \hat{u}\right)\), and always positive from \(\left(\hat{u}, u_{\max} \right]\). Thus, \(\rho(z)\) is decreasing from \(u_1\) to \(\hat{u}\) and increasing from \(\hat{u}\) to \(u_{\max}\). 
    That means \(\rho(z)\) has a minimum at \(\hat{u}\) and local maximums at both \(u_1\) and \(u_{\max}\), such that \(\rho(\hat{u}) = \rho_{\min}\). 
            \begin{itemize}[label={}, leftmargin=*]
            \item \textbf{a)} If \(\rho(u_1) \leq D(t)\), the optimal \(z\) for \eqref{eq28} is \( \omega_b^*(t)=u_1\).
            \item \textbf{b)} If \(  \rho(u_1) > D(t) \geq \rho(\hat{u}) \), it has an admissible solution to \eqref{eq28}, which is the root \(z\) of \(\rho(z)= D(t)\).
 
            \item \textbf{c)} If \( \rho(\hat{u}) > D(t) \), then it means even the smallest \(\rho(z)\) is too large and no solution exists for \eqref{eq28}.
        \end{itemize}
    
\textbf{Case 2:  \( u_1 \geq \hat{u}, \quad \)}
\(\rho'(z) \) is always positive from \([u_1,u_{\max}]\), such that it is strictly increasing from \([u_1,u_{\max}]\). Since this entails that \(\rho_{\min}=\rho(u_1)\), 
we know \(z\) must be \( \omega_b^*(t)=u_1\), or else no solution exists or \eqref{eq28}. 

All admissible solutions are put into \(\mathcal{\bar{W}}\). Once we have the set of admissible solutions, we choose the solution with minimal square value.
\end{proof}
\section{Simulations}
\label{section 4}
\begin{figure*}[!t]

\includegraphics[trim={0 0 0 0},clip,width= 1\textwidth]{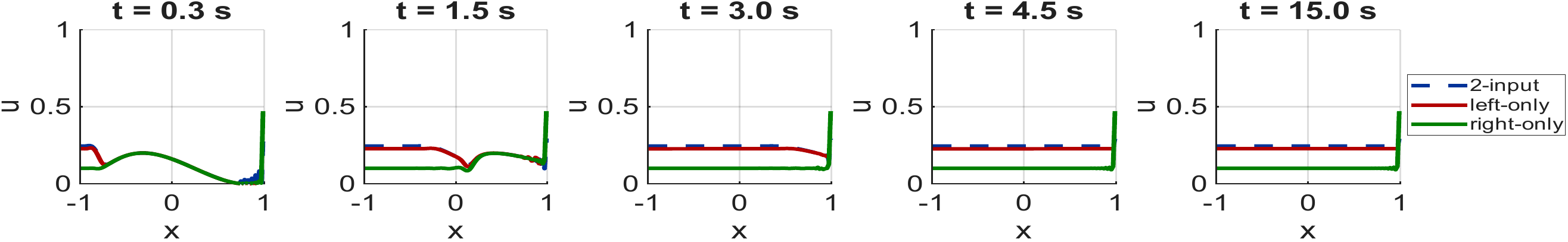} %
\caption{ For Example \ref{example1}, this figure shows that all three controllers attempt to stabilize the initial signal towards \(u^*\). This shows that single-input controllers can be as valid as the two-input controller. Each get close to \(u^* = 1/3\), without going past \(\bar{u} =1/4.\)} 
\label{fig:allsucceed}

\end{figure*}



We now illustrate the feasibility of ensuring both stability and invariance from a single boundary through some numerical examples. 
Our first goal is to demonstrate that these controllers successfully enforce both properties.  

We simulate the IBVP~\eqref{eq1}--\eqref{eq2} on the domain $[a,b] = [-1,1]$ with $u_{\max} = 1$, using a sinusoidal initial density
$u_0(x) = 0.1 - 0.1 \sin(\pi x)$.  The conservation law is solved using a Finite Volume Method, with the numerical flux modeled as Rusanov, using forward Euler for the time integration. Next, we intentionally choose $u^*$—used to define the Lyapunov functional in \eqref{eq:V-summary}—larger than $\bar u$—used in \eqref{eq:B-summary}—so that the system must balance the objective of reaching a desired state outside the safe set.  
In each simulation, control updates are computed every $0.015$ seconds over a total duration of 15 seconds.\\
\begin{example}
 For the first simulation we set \(u^*=1/3\) and \(\bar{u}=1/4\), and use the quadratic flux function from the Greenshield's model:
 \begin{equation}
     f(u) = u\left(1-\frac{u}{u_{\max}}\right) =u(1-u)     \label{fluxfinal}
 \end{equation}
Since this is the same model that the prior two-input paper~\cite{chiri2025boundary} proved feasibility for, we can directly compare our controller to their two-input controller. 


\begin{figure}[H]
\begin{center}
\includegraphics[width=8.4cm,height=3.5cm]{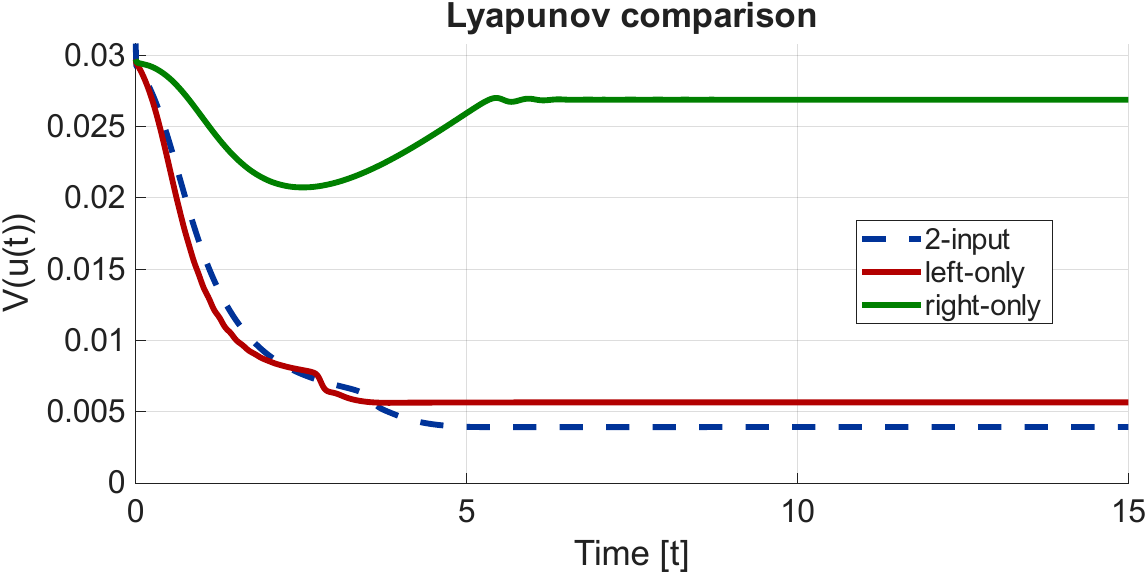} 
\caption{For Example \ref{example1}, this figure shows the change in \(V(t)\) over time, as the density gets closer to \(u^*\). Notably, the two-input system drives $V(t)$ closest to zero, indicating a more optimal performance compared to the other controllers. No controller reaches zero, as none can drive the density \(u\) to \(u^*\) without exiting the safe set of \(\bar{u}\), for this specific example.} 
\label{fig:allsucceedlyapunov}
\end{center}
\end{figure}

\begin{figure}[H]
\begin{center}
\includegraphics[width=8.4cm,height=3.5cm]{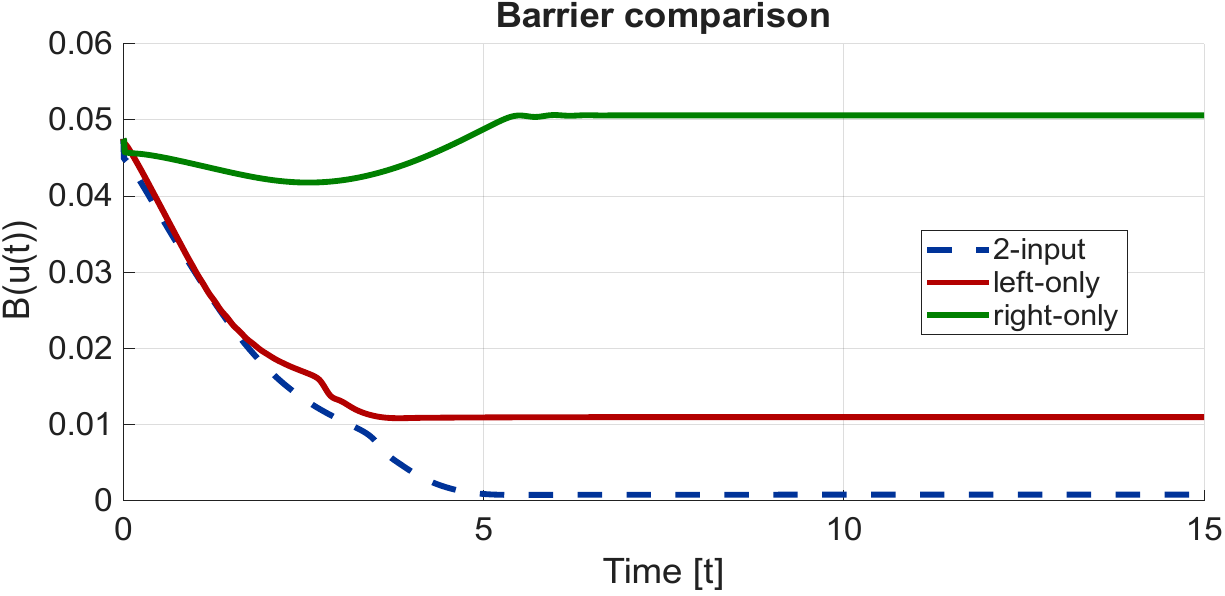} 
\caption{ For Example \ref{example1}, this figure depicts the change in \(B(t)\) over time for the first simulation. The closer \(u\) gets to \(\bar{u}\) the closer \(B(t)\) gets to zero. Here, none of the controllers leave the safe set, by never sinking below zero.} 
\label{fig:allsucceedbarrier}
\end{center}
\end{figure}

\begin{figure}[H]  
\begin{center}
\includegraphics[width=8.4cm]{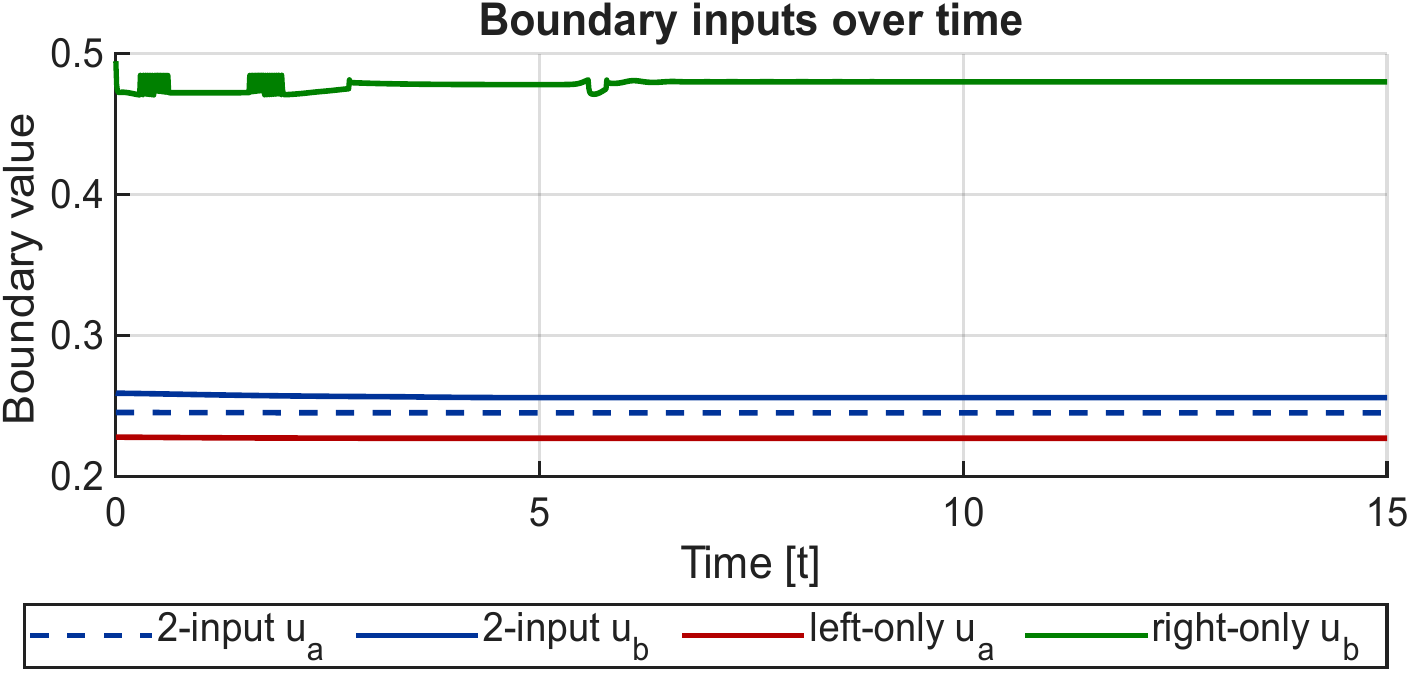} 
\caption{For Example \ref{example1}, this figure displays the boundary inputs taken by each controller over the 15 seconds. } 
\label{fig:inputs}
\end{center}
\end{figure}

In this setting, \(u_1 =\frac{2u^*+u_{\max}}{4} = 5/12 \approx0.417\) and \(u_2=\frac{u_{\max}}{4} = 1/4\). This gives us  \(\mathcal{C}_a = [0, 0.417], \ \mathcal{I}_a =[0, 0.25], \ \mathcal{C}_b = [0.417, 1], \text{ and }\mathcal{I}_b =[0.25, 1]\). Thus, the right-only controller fails to stabilize close to the value of $\bar{u}$, since the right-only controller can only pull values from \(\mathcal{C}_b\in [0.417,1]\). In Figure \ref{fig:allsucceedlyapunov}, the two-input case is able to get the closest to an optimal solution, because by having two inputs, it has a larger region of feasibility. Finally, whilst the left-side controller stabilized the signal at a lower input in Figure \ref{fig:inputs}, it comes at the cost of not stabilizing the signal as well as the aforementioned two-input case.\\
\label{example1}
\end{example}
\begin{example}
For the second simulation, we set \(u^*=1/4\) and \(\bar{u}=1/5\). Then, to validate the applicability of the proposed method to more general flux functions, we define the flux as a sextic polynomial: 
\begin{equation}
    f(u) = u\left(1-\frac{u}{u_{\max}}\right)(u^4+2u^3+3u^2+4u+5)
    \label{polyflux}
\end{equation}
which satisfies the properties required in Section~\ref{subsec:IBVP}.\\
\label{example2}
\end{example}

\begin{figure*}
\begin{center}
\includegraphics[trim={0 0cm 0cm 0cm},clip,width=1\textwidth, height =3.3cm]{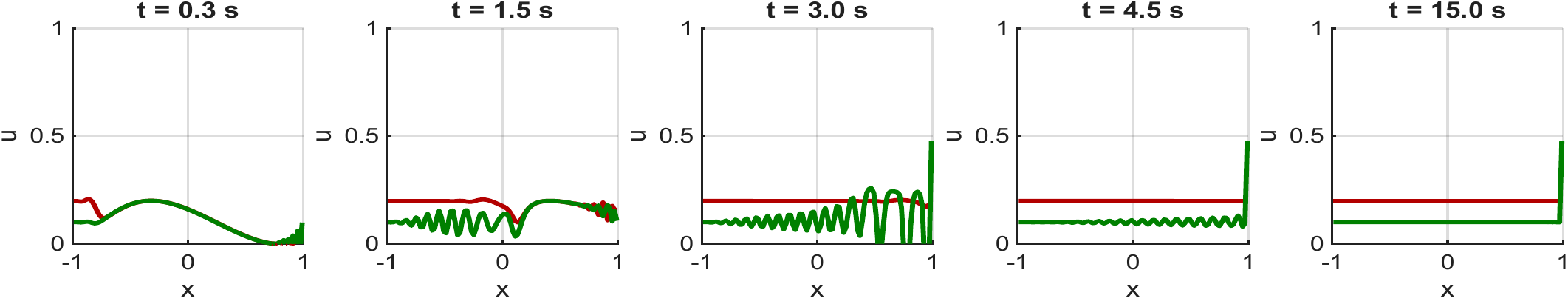}
\caption{For Example \ref{example2}, this figure demonstrates both the right-side and left-side case successfully stabilize the initial signal, with the left-side controller getting much closer to \(u^*=1/4\). 
} 
\label{fig:sextic}
\end{center}
\end{figure*}
\begin{example}
Finally, we choose a logarithmic flux function, based on the modified Greenberg Model (\cite{fluxtypes}). Here, we set \(u^*=1/4\), \(\bar{u}=1/5\) and \(\varepsilon = 0.1\), and define the flux function by
\begin{equation}
    f(u) = u\log\left(\frac{u_{\max}+\varepsilon}{u+\varepsilon}\right)=u\log\left(\frac{1.1}{u+0.1}\right)
    \label{logflux}
\end{equation}
\label{example3}
\end{example}

\begin{figure*}
\begin{center}
\includegraphics[trim={0 0cm 0cm 0},clip,width=1\textwidth, height =3.3cm]{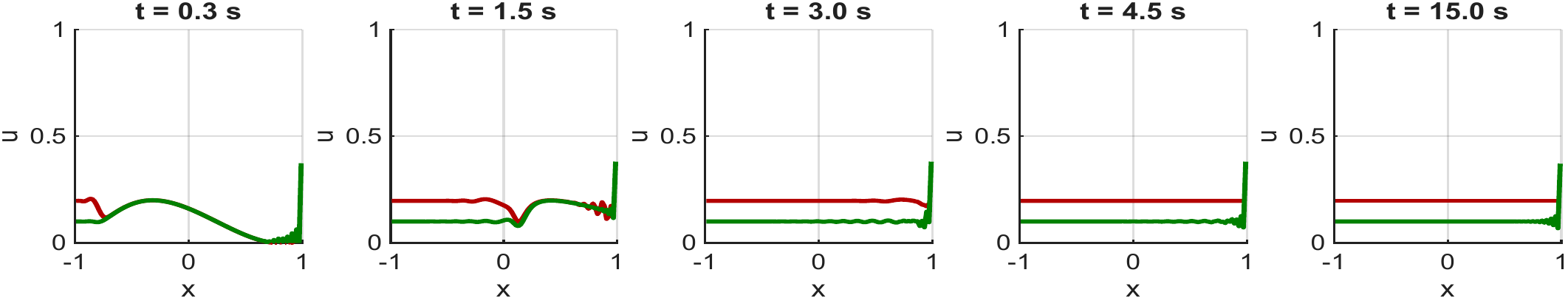}
\caption{For Example \ref{example3}, the figure demonstrates both the right-side and left-side cases successfully stabilize the initial signal, with the left-side control again getting the density closer towards \(u^* =1/4\). 
} 
\label{fig:log2}
\end{center}
\end{figure*}


Examples \ref{example2} and \ref{example3} help demonstrate the wide variety of flux functions we can apply this methodology to. Example~\ref{example2} shows the controllers capacity to stabilize higher order polynomials, while Example~\ref{example3} shows the capacity to stabilize non polynomial functions. Additionally, neither controller ever had \(B(t)\) sink below zero, which shows that our attempts to achieve stability, never negated the invariance for both flux functions. By succeeding for flux functions~\eqref{polyflux} and \eqref{logflux}, the controllers have demonstrated their applicability for any LWR model, as individual models differ by how they define their flux.

Whilst the goal of this paper is feasibility, it is important to note due to the nature of system, in which the traffic inflows from the left-side and outflows through the right-side, the right-side only controller can face unique challenges that can cause it to perform sub-optimally.

\section{Conclusion}

This paper presents a convex-optimization method to derive optimal boundary controllers of a PDE system describing traffic flow. Here, we used a convex optimization approach to find the conditions required to optimally ensure stability and invariance of controlled trajectories. Additionally, all results in this paper can be extended to the cases of more complex flux functions that have non-contiguous intervals of convexity. In the future, we envision to generalize this method to the case of junctions and traffic models on networks.


\bibliography{ifacconf}             
                                                   







\end{document}